
%
\documentclass[12pt,final]{amsart}

\usepackage{mathrsfs}
\usepackage{txfonts}
\usepackage{amssymb,dsfont}
\usepackage{enumerate}
\usepackage{slashed}
\usepackage{fancyhdr}
\usepackage{aliascnt}
\usepackage{hyperref}
\usepackage[square,super,sort,longnamesfirst]{natbib}
\usepackage[dvips,dvipsnames]{xcolor}
\usepackage[dvips]{graphicx}
\usepackage[all,cmtip]{xy}
\usepackage{pdfsync}

\makeatletter
\newcommand{\autorefcheckize}[1]{%
  \expandafter\let\csname @@\string#1\endcsname#1%
  \expandafter\DeclareRobustCommand\csname relax\string#1\endcsname[1]{%
    \csname @@\string#1\endcsname{##1}\wrtusdrf{##1}}%
  \expandafter\let\expandafter#1\csname relax\string#1\endcsname
}
\makeatother



\textwidth 16.20cm \textheight 22cm \topmargin 0.1cm
\oddsidemargin 0.1cm \evensidemargin 0.1cm
\parskip 0.0cm

\hypersetup{
linkcolor=red,citecolor=green,filecolor=magenta,urlcolor=cyan}

\newcommand{\abs}[1]{\left\lvert#1\right\rvert}
\newcommand{\set}[1]{\left\{#1\right\}}
\newcommand{\field}[1]{\mathbb{#1}}
\newcommand{\R}{\field{R}}
\newcommand{\Lg}[1]{\mathrm{#1}}
%
%
%
%

\newcommand{\dif}{\mathrm d}
\newcommand{\diff}{\,\mathrm d}

\newcommand{\To}{\longrightarrow}
\newcommand{\Rmn}[1]{\uppercase\expandafter{\romannueral#1}}

\DeclareMathOperator{\Div}{div}

\DeclareMathOperator{\dist}{dist}

\DeclareMathOperator{\vol}{Vol}


\theoremstyle{plain}
\newtheorem{theorem}{Theorem}[section]

\newaliascnt{lem}{theorem}
\newtheorem{lem}[lem]{Lemma}
 \aliascntresetthe{lem}

\newaliascnt{cor}{theorem}
\newtheorem{cor}[cor]{Corollary}
 \aliascntresetthe{cor}

\newaliascnt{prop}{theorem}
\newtheorem{prop}[prop]{Proposition}
 \aliascntresetthe{prop}

\theoremstyle{remark}
\newtheorem{rem}{Remark}[section]

\theoremstyle{definition}
\newtheorem{defn}{Definition}[section]

\numberwithin{equation}{section}


\begin{document}

\title[quasi-harmonic]{A note on the nonexistence of quasi-harmonic spheres}


\author{Jiayu Li}
\address{School of Mathematics Sciences  \\ University of Science and Technology of China\\ 230026 Hefei, Anhui, China}
\email{jiayuli@ustc.edu.cn}

\author{Linlin Sun}
\address{School of Mathematics Sciences  \\ University of Science and Technology of China\\ 230026 Hefei, Anhui, China}
\email{sunll@ustc.edu.cn}

\thanks{The authors were supported in part by NSF in China (No. 11571332, 11131007, 11526212, 11426236). 
The authors would like to thank ZHU Xiangrong for his  useful discussions. }
\subjclass[2010]{58E20, 53C43}

\date{\today}

\begin{abstract}%
In this paper we study the properties of quasi-harmonic spheres from $\R^m, m>2$. We show that if the universal covering $\tilde N$ of $N$ admits a nonnegative
strictly convex function $\rho$ with the exponential growth condition $\rho(y)\leq C\exp\left(\frac14\tilde d(y)^{2/m}\right)$ where $\tilde d(y)$ is the distance function on $\tilde N$,
then $N$ does not admit a quasi-harmonic sphere, which generalize Li-Zhu's result \cite{Li2010non}. 
We also show that if $u$ is a quasi-harmonic sphere,
then the property that $u$ is of finite energy ($\int_{\R^m}e(u)e^{-\abs{x}^2/4}\dif x<\infty$) is equivalent to the property that $u$ satisfies the large energy condition ($\lim_{R\to\infty}R^{m}e^{-R^2/4}\int_{B_R(0)}e(u)e^{-\abs{x}^2/4}\diff x=0$).
\end{abstract}%
\keywords{quasi-harmonic sphere, heat flow, nonexistence}

\maketitle

\section{Introduction}
Let $M^m,N^n$ be two compact Riemannian manifolds of dimension $m$ and $n$ respectively. Let $u\in W^{1,2}(M,N)$, the energy of $u$ is defined by
\begin{align*}
E(u)=\dfrac12\int_M\abs{\dif u}^2\diff\vol_M.
\end{align*}
The critical points of the energy functional are called harmonic maps.
Eells and Sampson \cite{Eells1964harmonic} introduce the  heat flow and prove that, the heat flow has a global solution which subconverges strongly to a harmonic map at infinity if the sectional curvature of the target manifold is non-positive. This result was generalized by Ding and Lin \cite{Ding1992generalization} to the case that the universal covering of $N$ admits a nonnegative strictly convex function with quadratic growth.
\par
However, in general, the heat flow may produce singularities at a finite time (e.g. \cite{Chang1992finite,Coron1989explosion}). Struwe divided singularities of the heat flow into two different types. One of this type is associated to quasi-harmonic spheres (c.f. \cite{Lin1999harmonic}).
\par

\begin{defn}
A quasi-harmonic sphere is a harmonic map from $\left(\R^m,\exp(-x^2/2(m-2))g_0\right)$ to a Riemannian manifold, where $g_0$ is the Euclidean metric in $\R^m$ ($m>2$), i.e.,
\begin{equation}\label{eq:quasi}
\tau(u)=\dfrac12x\cdot\dif u,
\end{equation}
with finite energy
\begin{equation}\label{eq:energy}
\int_{\R^m}e(u)e^{-\abs{x}^2/4}\dif x<\infty,
\end{equation}
where
\begin{equation*}
e(u)=\dfrac12\abs{\dif u}^2.
\end{equation*}
\end{defn}
Based on the work of Lin and Wang \cite{Lin1999harmonic}, we know that Liouville theorems for harmonic spheres (harmonic maps from spheres) and quasi-harmonic spheres imply the global existence of the heat flows. Li and Wang \cite{Li2009liouville} proved that there are no non-constant quasi-harmonic spheres with images in a regular ball.  Li and Zhu \cite{Li2010non} proved that, if the heat flow has a global solution and there is no harmonic map from $\Lg{S}^{l}$ to $N$ for $2\leq l\leq m-1$, then this flow subconverges in $C^2$ norm to a smooth harmonic map at infinity. Moreover, in the same paper, they also proved that the heat flow exists globally provided that the universal covering $\tilde N$ of $N$ admits a strictly convex positive function $\rho$ with polynomial growth, i.e.,
\begin{align*}
\tilde\nabla^2\rho>0,\quad0<\rho(y)<C(1+\tilde d(y,y_0))^P,\quad\forall y\in\tilde N,
\end{align*}
for some $y_0\in\tilde N$ and some positive constants $C,P$. Here $\tilde d$ is the distance function on $\tilde N$. Li and Yang \cite{Li2012nonexistence} generalized these results to the case of ``quasi-harmonic sphere with large energy condition" under the same assumption on $\rho$. The large energy condition is defined by
\begin{align}\label{eq:large}
\lim_{R\To\infty}R^{m}e^{-R^2/4}\int_{B_{R}(0)}e(u)e^{-\abs{x}^2/4}\diff x=0.
\end{align}
\par
Our first main result is as follows.
\begin{theorem}\label{thm:main1}Suppose $u$ satisfies \eqref{eq:quasi}, then the following three conditions are equivalent to each other.
\begin{enumerate}
\item The large energy condition holds, i.e., \eqref{eq:large} holds.
\item 
\begin{align*}
\int_{\R^m}\abs{u_r}^2\abs{x}^{4-m}\diff x<\infty.
\end{align*}
\item The total energy is finite, i.e., \eqref{eq:energy} holds.
\end{enumerate}
\end{theorem}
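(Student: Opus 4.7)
My plan is to derive the three equivalences from two Pohozaev-type integral identities obtained by contracting the quasi-harmonic equation $\tau(u)=\tfrac12 x\cdot\dif u$ with test vector fields of the form $\phi(\abs{x})\,x\cdot\nabla u$. The key simplification is that $\tau(u)-\Delta u=-A(u)(\nabla u,\nabla u)$ is normal to $TN$ while $x\cdot\nabla u$ is tangent to $TN$, so the second fundamental form contribution drops and only intrinsic interior integrals and boundary data on $\partial B_R$ survive. Writing $\beta(R):=\int_{B_R}e(u)e^{-\abs{x}^2/4}\dif x$ and $\gamma(R):=\int_{B_R}\abs{x}^2\,e(u)e^{-\abs{x}^2/4}\dif x$, the choice $\phi(\abs{x})=e^{-\abs{x}^2/4}$ gives, after integration by parts,
\begin{equation*}
(m-2)\beta(R)-\tfrac12\gamma(R)=Re^{-R^2/4}\!\!\int_{\partial B_R}\!\bigl(e(u)-\abs{u_r}^2\bigr)\,d\sigma,\qquad(\star)
\end{equation*}
while the critical choice $\phi(\abs{x})=\abs{x}^{2-m}$ (the power for which a would-be $\int\rho\,\abs{\nabla u}^2$ term has vanishing coefficient) gives
\begin{equation*}
\tfrac12\!\int_{B_R}\!\abs{x}^{4-m}\abs{u_r}^2\,\dif x=(m-2)\!\int_{B_R}\!\abs{x}^{2-m}\abs{u_r}^2\,\dif x-R^{3-m}\!\!\int_{\partial B_R}\!\bigl(e(u)-\abs{u_r}^2\bigr)\,d\sigma.\qquad(\star\star)
\end{equation*}

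The implication $(3)\Rightarrow(1)$ is immediate because $R^{m}e^{-R^2/4}\to 0$ and $\beta$ is bounded. For the other directions I would view $(\star)$ as a first-order linear ODE for $\beta$: using $\beta'(R)=e^{-R^2/4}\!\int_{\partial B_R}e(u)\,d\sigma$ rewrites $(\star)$ as
$R\beta'(R)-(m-2)\beta(R)=-\tfrac12\gamma(R)+Re^{-R^2/4}\!\int_{\partial B_R}\abs{u_r}^2\,d\sigma$.
Multiplying by the integrating factor $R^{-(m-1)}$ and applying Fubini to the resulting $\gamma$-integral produces a master identity which expresses $R^{2-m}\bigl[2(m-2)\beta(R)-\gamma(R)\bigr]$ as a linear combination of $\int_{B_R}\abs{x}^{2-m}e^{-\abs{x}^2/4}\abs{u_r}^2\dif x$ and $\int_{B_R}\abs{x}^{4-m}e^{-\abs{x}^2/4}e(u)\dif x$. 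Using $e^{-\abs{x}^2/4}\le 1$ to dominate these weighted integrals by the unweighted quantities in (2), and then combining with $(\star\star)$ to eliminate the boundary flux $\int_{\partial B_R}(e(u)-\abs{u_r}^2)d\sigma$, one converts finiteness of (2) into finiteness of $\beta(\infty)$ and vice versa. For $(1)\Rightarrow(3)$ (the remaining bridge), I would argue by contradiction: if $\beta(R)\nearrow\infty$, the ODE rewrite of $(\star)$ together with the pointwise inequality $\abs{u_r}^2\le 2e(u)$ forces $\beta(R)$ to grow at least like $R^{m-2}$ modulo Gaussian corrections, contradicting $R^{m}e^{-R^2/4}\beta(R)\to 0$.

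The main obstacle I anticipate is the passage to the limit $R\to\infty$: the boundary fluxes $Re^{-R^2/4}\!\int_{\partial B_R}e(u)\,d\sigma$ and $R^{3-m}\!\int_{\partial B_R}\abs{u_r}^2\,d\sigma$ do not decay monotonically, so a pointwise limit is unavailable. One therefore has to extract a sequence $R_k\to\infty$ along which both boundary fluxes vanish simultaneously, by exploiting the integrability in $R$ forced on them by each of $(1)$, $(2)$, $(3)$ through a Chebyshev-type argument. Arranging a single sequence compatible with both $(\star)$ and $(\star\star)$ is the delicate technical step.
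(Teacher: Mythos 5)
Your two integral identities $(\star)$ and $(\star\star)$ are correct; they are exactly the integrated forms of the paper's two radial differential identities, namely $\frac{\dif}{\dif r}\bigl(r^m e^{-r^2/4}H(r)\bigr)=\bigl(\frac r2-\frac{m-2}{r}\bigr)r^m e^{-r^2/4}\int_{\Lg{S}^{m-1}}e(u)\dif\theta$ and $\frac{\dif}{\dif r}\bigl(r^2H(r)\bigr)=r^2\bigl(\frac r2-\frac{m-2}{r}\bigr)\int_{\Lg{S}^{m-1}}\abs{u_r}^2\dif\theta$, where $H(r)=\int_{\Lg{S}^{m-1}}(\abs{u_r}^2-e(u))\dif\theta$. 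The observation about the second fundamental form dropping out is also correct. However, you are missing the single most important ingredient of the paper's argument: the third integrating factor $r^{2m-2}e^{-r^2/2}$, for which $\frac{\dif}{\dif r}\bigl(r^{2m-2}e^{-r^2/2}H(r)\bigr)=\bigl(r-\frac{2m-4}{r}\bigr)r^{2m-2}e^{-r^2/2}\int_{\Lg{S}^{m-1}}\bigl(e(u)-\tfrac12\abs{u_r}^2\bigr)\dif\theta\geq 0$ for $r\geq\sqrt{2(m-2)}$. This produces the sharp dichotomy: either $H(R)\leq0$ for all $R$ (with a quantitative lower bound), or $H(R)\geq cR^{2-2m}e^{R^2/2}$ for $R$ large. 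Only the second branch gives a contradiction with the large energy condition, because in that case $\int_{\Lg{S}^{m-1}}e(u)\dif\theta\geq\tfrac12\int_{\Lg{S}^{m-1}}\abs{u_r}^2\dif\theta\geq\tfrac12 H(R)\gtrsim R^{2-2m}e^{R^2/2}$, which forces $\beta(R)\gtrsim R^{-m}e^{R^2/4}$ and hence $\liminf R^m e^{-R^2/4}\beta(R)>0$. Your claim that $\beta(R)$ grows ``at least like $R^{m-2}$ modulo Gaussian corrections'' is too weak: $R^{m}e^{-R^2/4}\cdot R^{m-2}\to0$, so growth at rate $R^{m-2}$ does \emph{not} contradict the large energy condition. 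Without the monotonicity of $r^{2m-2}e^{-r^2/2}H(r)$ there is no mechanism in your sketch producing the required Gaussian growth rate $e^{R^2/4}$.

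There is a second gap in your proposed bridge between (2) and (3). Your master identity, obtained by integrating $(\star)/R^{m-1}$ and Fubini, necessarily contains a term of the form $\int_{B_R\setminus B_{R_0}}\abs{x}^{4-m}e(u)e^{-\abs{x}^2/4}\dif x$, which involves the full energy density $e(u)$. This term cannot be dominated by $\int\abs{u_r}^2\abs{x}^{4-m}\dif x$ from condition (2), since $e(u)\geq\abs{u_r}^2$ and not the reverse; so $e^{-\abs{x}^2/4}\leq1$ does not rescue the inequality in the direction you need. The paper circumvents exactly this by splitting into a radial part (controlled by $\abs{u_r}^2$) and a tangential part folded into $H(R)^+$: Proposition~\ref{lem:C5} bounds $\int_{B_R}r^{4-m-\delta}\abs{u_r}^2\dif x$ by a fixed constant plus $4R^2H(R)^+$, and Corollary~\ref{lem:energy} then controls $\int_{B_R}r^{2-m-\delta}e(u)\dif x$ by the same error $R^2H(R)^+$; the key step from (2) to (3) is observing that (2) forces $\liminf_{R\to\infty}R^2H(R)^+=0$, which one cannot extract from your master identity alone. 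The difficulty you anticipate (choosing a common sequence $R_k$ along which both boundary fluxes vanish) is real but secondary; the essential missing idea is the sign-definite monotonicity that closes the dichotomy.
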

\begin{rem}Li and Zhu \cite{Li2010non} stated the following estimate for quasi-harmonic sphere,
\begin{align}\label{eq:scaling}
\int_{B_R(0)}\abs{\dif u}^2\diff x\leq CR^{m-2},\quad\forall R>0,
\end{align}
where $C$ is a constant independent of $R$. As a consequence, this condition \eqref{eq:scaling}\footnote{We thank ZHU Xiangrong for pointing out this equivalent condition.} is equivalent to \eqref{eq:energy} and is also equivalent to the following condition
\begin{align*}
\int_{\R^m}\abs{\dif u}^2\abs{x}^{2-m-\delta}\diff x<\infty
\end{align*}
for some or every $\delta>0$. In fact, one can get more, see \autoref{lem:energy}.
\end{rem}
Our second main result is that, Li-Zhu's result holds, if the universal covering $\tilde N$ of $N$ admits a nonnegative strictly convex function $\rho$ with the following exponential  growth condition: for some constant $C$,
\begin{align}\label{eq:growth}
\rho(y)\leq C\exp\left(\dfrac14\tilde d(y)^{2/m}\right),\quad\forall y\in\tilde N.
\end{align}
Here $\tilde d(y)=\tilde d(y,y_0)$ is the distance function on $\tilde N$ from some fixed point $y_0\in\tilde N$.
It is easy to check that this assumption is weaker than the one in \cite{Li2010non}.
\begin{theorem}\label{thm:main2}
Suppose $m\geq 3$ and there is a nonnegative strictly convex function  $\rho$ on the universal covering of the target manifold $N$ such that \eqref{eq:growth} holds. Then  there is no non-constant quasi-harmonic sphere $u$ from $\R^m$ to $N$.
\end{theorem}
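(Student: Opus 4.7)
The strategy is a weighted convexity / maximum-principle argument in the Gaussian measure $\mu := e^{-|x|^2/4}\,dx$. Since $\R^m$ is simply connected, lift $u$ to $\tilde u\colon\R^m\to\tilde N$, which still satisfies $\tilde\tau(\tilde u)=\tfrac12\,x\cdot d\tilde u$. Set $v:=\rho\circ\tilde u$. The composition formula for the tension field, together with the quasi-harmonic equation, yields
\begin{align*}
\Delta v \;=\; \tilde\nabla^2\rho(d\tilde u,d\tilde u)+\langle\tilde\nabla\rho,\tilde\tau(\tilde u)\rangle \;=\; \tilde\nabla^2\rho(d\tilde u,d\tilde u)+\tfrac12\,x\cdot\nabla v .
\end{align*}
With the Ornstein--Uhlenbeck drift Laplacian $L:=\Delta-\tfrac12\,x\cdot\nabla=\mu^{-1}\operatorname{div}(\mu\,\nabla\,\cdot\,)$, this reads $Lv=\tilde\nabla^2\rho(d\tilde u,d\tilde u)\ge 0$, and strict convexity of $\rho$ guarantees that $Lv\equiv 0$ forces $d\tilde u\equiv 0$, hence $u$ constant. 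The task is therefore to prove $Lv\equiv 0$.

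To run the maximum principle on the non-compact domain, I would pick a smooth radial cut-off $\phi_R$ with $\phi_R\equiv 1$ on $B_R$, supported in $B_{2R}$, and satisfying $|\nabla\phi_R|\le C/R$, $|\Delta\phi_R|\le C/R^2$. Since $L$ is $\mu$-self-adjoint and $\phi_R$ is compactly supported,
\begin{align*}
\int_{\R^m}\phi_R\,\tilde\nabla^2\rho(d\tilde u,d\tilde u)\,\mu \;=\; \int_{\R^m}\phi_R\,Lv\,\mu \;=\; \int_{\R^m}L\phi_R\cdot v\,\mu .
\end{align*}
On the annulus $A_R:=B_{2R}\setminus B_R$ one has $|L\phi_R|\le C$, so it suffices to verify that $\int_{A_R}v\mu\to 0$. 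If one knows a distance estimate $\tilde d(\tilde u(x),y_0)\le C(1+|x|)^\alpha$ for some $\alpha<m$, then \eqref{eq:growth} gives $v(x)\le C\exp\bigl(\tfrac14 C|x|^{2\alpha/m}\bigr)$ on $A_R$; multiplying by $\mu$ and by the annulus volume $\sim R^m$ produces a bound of the form $CR^m\exp\bigl(\tfrac14(CR^{2\alpha/m}-R^2)\bigr)\to 0$, since $2\alpha/m<2$. Letting $R\to\infty$ then yields $\int_{\R^m}\tilde\nabla^2\rho(d\tilde u,d\tilde u)\,\mu=0$, and strict convexity of $\rho$ forces $d\tilde u\equiv 0$, completing the proof.

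\textbf{Main obstacle.} The genuinely delicate point is the distance estimate $\tilde d(\tilde u(x),y_0)\le C(1+|x|)^\alpha$ with some $\alpha<m$, because a pointwise gradient bound on $|d\tilde u|$ is not available a priori. I would attempt this either via $\epsilon$-regularity applied to suitable rescalings, exploiting the scaling bound \eqref{eq:scaling}, or by Cauchy--Schwarz along radial segments combined with the weighted integrability $\int_{\R^m}|u_r|^2|x|^{4-m}\,dx<\infty$ from \autoref{thm:main1} to control spherical averages of $\tilde d(\tilde u)^2$ and then bootstrap to a pointwise (or just a polynomial-in-$|x|$) bound. The exponent $2/m$ built into the hypothesis \eqref{eq:growth} is exactly the slack one needs: any sub-power-$m$ growth of $\tilde d(\tilde u(\cdot))$ is enough for the Gaussian weight to dominate, which is why the exponential growth assumption is the natural generalisation of Li--Zhu's polynomial one.
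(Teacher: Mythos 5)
Your setup — lift to $\tilde u$, compose with $\rho$ to get $f=\rho\circ\tilde u$ satisfying $\Delta f-\tfrac12 x\cdot\nabla f=\tilde\nabla^2\rho(d\tilde u,d\tilde u)\ge 0$ — is the same as the paper's. Where you diverge, and where your argument has a genuine gap, is in the mechanism for killing the nonnegative quantity $\tilde\nabla^2\rho(d\tilde u,d\tilde u)$.

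Your plan requires a pointwise bound of the form $\tilde d(\tilde u(x),y_0)\le C(1+|x|)^\alpha$ with $\alpha<m$, so that $f\mu$ is integrable and the tail over the annulus vanishes. You flag this as the ``main obstacle,'' and indeed it is: neither of your suggested routes produces it. The scaling bound \eqref{eq:scaling} gives energy control on balls, but $\eps$-regularity yields pointwise gradient bounds only at scales where the local rescaled energy is below a threshold, which is not guaranteed uniformly out to infinity; and Cauchy--Schwarz along radial segments, combined with the weighted integrability from \autoref{thm:main1}, controls only \emph{spherical averages} of $\tilde d(\tilde u)$, not a pointwise bound. The paper never establishes any such pointwise estimate; instead it obtains a much weaker \emph{exponential moment} bound. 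Specifically, \autoref{lem:distance} (via the BMO estimate \eqref{eq:bmo} and John--Nirenberg) shows $\fint_{B_r}\exp\bigl(C_m^{-1}E_r(u)^{-1/2}r^{2-m}d\bigr)\,dx\le C_m$, where $d=\dist(u(\cdot),u(0))$. The exponent $2/m$ in \eqref{eq:growth} is then exploited through Young's inequality to split $\tilde d^{2/m}\le \tfrac2m\,r^{2-m}\tilde d+\tfrac{m-2}{m}r^2$: the first summand is handled by the John--Nirenberg moment, the second is absorbed into a Gaussian factor. So the role of $2/m$ is tied to the $r^{2-m}$ normalization in the BMO estimate, not to a pointwise growth rate of $\tilde d(\tilde u)$.

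The second divergence: once the paper has the $L^p$-bound $\int_{B_{2R}}f^p\,dx\le C R^m e^{(p/4-p/(2m))R^2}$ for small $p$, it does \emph{not} run the self-adjoint integration-by-parts against a cutoff. Instead it invokes \autoref{lem:decay} (Lemma 3.1), which says that if $f$ is non-constant then $\int_{B_R}f\,dx\gtrsim R^{-2}e^{R^2/4}$, and a Moser-iteration estimate $\fint_{B_R}f\,dx\le C_pR^{m/p}\bigl(\int_{B_{2R}}f^p\bigr)^{1/p}$ to compare these two; the exponent mismatch $e^{R^2/4}$ versus $e^{(1/4-1/(2m))R^2}$ then produces the contradiction. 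Your cutoff approach could in principle work if you had $\int f\,\mu<\infty$, but that is precisely what is not available without the pointwise bound, whereas the paper's route only needs an $L^p$ bound for some small $p>0$. In short: the strategy is sound at the conceptual level, but the crucial pointwise growth estimate you need is neither proved nor (apparently) provable, and the paper succeeds by replacing it with the BMO/John--Nirenberg exponential integrability and a Moser-iteration comparison.
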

\section{Proof of \autoref{thm:main1}}
In this section, we derive some estimates and prove \autoref{thm:main1}. Introduce 
\begin{align*}
H(r)\coloneqq\int_{\Lg{S}^{m-1}}\left(\abs{u_r}^2-e(u)\right)\dif\theta,\quad\forall r>0.
\end{align*}
We begin with the following Lemma.
\begin{lem}\label{lem:basic1}Suppose $u$ satisfies \eqref{eq:quasi}. 
Then
\begin{enumerate}
\item either
\begin{equation}\label{eq:2}
-R^{-2}(m-2)\int_{B_{\sqrt{2(m-2)}}}r^{2-m}\abs{u_r}^2\diff x\leq H(R)\leq0,\quad\forall R>0,
\end{equation}
\item or there exists  $R_0\geq\sqrt{2(m-2)}$  such that
\begin{equation}\label{eq:22}
H(R)\geq R^{2-2m}e^{R^2/2}R_0^{2m-2}e^{-R_0^2/2}H(R_0)>0,\quad\forall\ R>R_0.
\end{equation}
\end{enumerate}
Here $\Lg{S}^{m-1}$ stands for the unit sphere in $\R^m$ centering at $0$ and $B_R=B_R(0)$.
\end{lem}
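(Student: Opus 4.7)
\emph{Proof proposal.}
My plan is to derive a first-order ODE for $H(R)$ from the quasi-harmonic equation \eqref{eq:quasi} and then convert it, via a carefully chosen integrating factor, into a monotonicity statement whose direction reverses at the critical radius $R=\sqrt{2(m-2)}$; the two alternatives in the lemma will correspond to the two sign regimes. Taking the inner product of \eqref{eq:quasi} with $u_r$ and exploiting that the second fundamental form component of $\tau(u)$ is normal to $N$, hence orthogonal to $u_r$, I obtain the pointwise identity $\langle\Delta u,u_r\rangle=\tfrac{r}{2}|u_r|^2$. Expanding $\Delta u=u_{rr}+\tfrac{m-1}{r}u_r+\tfrac{1}{r^2}\Delta_{\Lg{S}^{m-1}}u$, integrating over the unit sphere, and integrating by parts in the spherical Laplacian (using $[\partial_r,\nabla_{\Lg{S}^{m-1}}]=0$) would yield
\begin{align*}
H'(R)+\tfrac{2}{R}H(R)=\Bigl(\tfrac{R}{2}-\tfrac{m-2}{R}\Bigr)A(R),
\end{align*}
where $A(R):=\int_{\Lg{S}^{m-1}}|u_r|^2\dif\theta$. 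Writing $A=2H+R^{-2}B$ with $B(R):=\int_{\Lg{S}^{m-1}}|\nabla_{\Lg{S}^{m-1}}u|^2\dif\theta\geq 0$ converts the coefficient of $A$ into a multiple of the manifestly non-negative quantity $B$, and multiplying by the integrating factor $R^{2m-2}e^{-R^2/2}$ produces the key identity
\begin{align*}
\frac{d}{dR}\Bigl[R^{2m-2}e^{-R^2/2}H(R)\Bigr]=R^{2m-4}e^{-R^2/2}\Bigl(\tfrac{R}{2}-\tfrac{m-2}{R}\Bigr)B(R).
\end{align*}

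Setting $G(R):=R^{2m-2}e^{-R^2/2}H(R)$, the identity shows $G$ is non-decreasing on $[\sqrt{2(m-2)},\infty)$ and non-increasing on $(0,\sqrt{2(m-2)}]$. If $H(R_0)>0$ at some $R_0\geq\sqrt{2(m-2)}$, then $G(R)\geq G(R_0)>0$ for all $R\geq R_0$, which rearranges to \eqref{eq:22}; this is alternative (2). Otherwise $H\leq 0$ throughout $[\sqrt{2(m-2)},\infty)$, and monotonicity of $G$ on $(0,\sqrt{2(m-2)}]$ together with the boundary condition $R^2H(R)\to 0$ as $R\to 0^+$ (from smoothness of $u$ near the origin, where $|u_r|^2$ and $e(u)$ remain bounded) forces $H\leq 0$ on all of $(0,\infty)$.

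For the lower bound in \eqref{eq:2} I would integrate the equivalent form $[R^2H(R)]'=(\tfrac{R^3}{2}-(m-2)R)A(R)$ from $0$ to $R$ and pass back to Cartesian coordinates, obtaining
\begin{align*}
R^2H(R)=\tfrac12\int_{B_R}r^{4-m}|u_r|^2\diff x-(m-2)\int_{B_R}r^{2-m}|u_r|^2\diff x.
\end{align*}
The first integrand is non-negative; and on the outer annulus $B_R\setminus B_{\sqrt{2(m-2)}}$ (when $R>\sqrt{2(m-2)}$) the combined integrand $\tfrac12 r^{2-m}(r^2-2(m-2))|u_r|^2$ is also non-negative, so discarding both of these non-negative pieces leaves precisely the bound in \eqref{eq:2}. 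The main subtlety I anticipate is spotting the integrating factor $R^{2m-2}e^{-R^2/2}$ together with the substitution $A=2H+R^{-2}B$ that decouples the sign analysis by feeding the problematic $A$-term into the sign-definite quantity $B$; once this algebraic rearrangement is in hand, everything else is routine ODE and calculus.
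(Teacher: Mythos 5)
Your proposal is correct and follows essentially the same route as the paper: the same Pohozaev-type ODE for $H$, the same two integrating factors $r^2$ and $r^{2m-2}e^{-r^2/2}$, the same monotonicity analysis with the critical radius $\sqrt{2(m-2)}$, and the same use of the boundary condition at $r\to 0^+$ to rule out positivity of $H$ on $(0,\sqrt{2(m-2)}]$. The only cosmetic differences are that you re-derive the identity \eqref{eq:basic} (which the paper cites from Li--Zhu), and that you obtain the lower bound in \eqref{eq:2} by integrating the exact identity for $[r^2H]'$ and discarding two nonnegative pieces, whereas the paper first discards the $r^3/2$ term and integrates the resulting differential inequality; these are interchangeable.
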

\begin{proof}A direct computation gives (c.f. Lemma 3.3 in \cite{Li2010non})
\begin{equation}\label{eq:basic}
\dfrac{\dif}{\dif r}\int_{\Lg{S}^{m-1}}\left(\abs{u_r}^2-e(u)\right)\dif\theta-\int_{\Lg{S}^{m-1}}\left(\dfrac2re(u)+\left(\dfrac{r}{2}-\dfrac{m}{r}\right)\abs{u_r}^2\right)\dif\theta=0,\quad \forall r>0.
\end{equation}
According to this identity, we get
\begin{align*}
\dfrac{\dif}{\dif r}\int_{\Lg{S}^{m-1}}\left(\abs{u_r}^2-e(u)\right)\dif\theta+\dfrac{2}{r}\int_{\Lg{S}^{m-1}}\left(\abs{u_r}^2-e(u)\right)\dif\theta=\left(\dfrac{r}{2}-\dfrac{m-2}{r}\right)\int_{\Lg{S}^{m-1}}\abs{u_r}^2\diff\theta.
\end{align*}
From this formula, we know
\begin{align}\label{eq:H20}
\dfrac{\dif}{\dif r}\left(r^2H(r)\right)=r^2\left(\dfrac{r}{2}-\dfrac{m-2}{r}\right)\int_{\Lg{S}^{m-1}}\abs{u_r}^2\diff\theta.
\end{align}
Thus, $r^2H(r)$ is increase from $\sqrt{2(m-2)}$ to infinity, and is decrease from $0$ to $\sqrt{2(m-2)}$. Setting $C_0\coloneqq\sqrt{2(m-2)}$, we get
\begin{align*}
r^2H(r)\geq C_0^2H(C_0),\quad\forall r>0.
\end{align*}
Again according to \eqref{eq:basic} to obtain
\begin{align*}
&\dfrac{\dif}{\dif r}\int_{\Lg{S}^{m-1}}\left(\abs{u_r}^2-e(u)\right)\dif\theta+\left(\dfrac{2(m-1)}{r}-r\right)\int_{\Lg{S}^{m-1}}\left(\abs{u_r}^2-e(u)\right)\dif\theta\\
=&\left(r-\dfrac{2(m-2)}{r}\right)\int_{\Lg{S}^{m-1}}\left(e(u)-\dfrac12\abs{u_r}^2\right)\diff\theta,
\end{align*}
which implies
\begin{equation}\label{eq:basic1}
\dfrac{\dif}{\dif r}\left(r^{2m-2}e^{-r^2/2}H(r)\right)= r^{2m-2}e^{-r^2/2}\left(r-\dfrac{2m-4}{r}\right)\int_{\Lg{S}^{m-1}}\left(e(u)-\dfrac12\abs{u_r}^2\right)\diff\theta.
\end{equation}
Hence, $r^{2m-2}e^{-r^2/2}H(r)$ is increase from $\sqrt{2(m-2)}$ to infinity, and is decrease from $0$ to $\sqrt{2(m-2)}$. It is obvious that
\begin{equation*}
r^{2m-2}e^{-r^2/2}\int_{\Lg{S}^{m-1}}\left(\abs{u_r}^2-e(u)\right)\dif\theta\to0,\quad\text{as}\  r\to0.
\end{equation*}
\par

Moreover, 
\begin{align*}
\dfrac{\dif}{\dif r}\left(r^2H(r)\right)\geq-(m-2)r\int_{\Lg{S}^{m-1}}\abs{u_r}^2\diff\theta,
\end{align*}
which yields
\begin{align*}
R^2H(R)\geq-(m-2)\int_{B_R}r^{2-m}\abs{u_r}^2\diff x,\quad\forall R>0.
\end{align*}
Here we have used the fact
\begin{align*}
\lim_{r\to0}r^2H(r)=0.
\end{align*}
Therefore,
\begin{align*}
r^2H(r)\geq C_0^2H(C_0)\geq-(m-2)\int_{B_{C_0}}r^{2-m}\abs{u_r}^2\diff x,\quad\forall r>0.
\end{align*}
\par
Now we can finish the  proof of  this Lemma. If we do not have \eqref{eq:2}, then there exists $R_0\geq\sqrt{2(m-2)}$, such that
\begin{align*}
\int_{\set{R_0}\times\Lg{S}^{m-1}}\left(\abs{u_r}^2-e(u)\right)\dif\theta>0,
\end{align*}
then for every $r>R_0$,
\begin{align*}
&r^{2m-2}e^{-r^2/2}H(r)\geq R_0^{2m-2}e^{-R_0^2/2}H(R_0)>0,
\end{align*}
which means that \eqref{eq:22} holds.
\end{proof}
\begin{rem}Suppose $u$ satisfies \eqref{eq:quasi}, then 
\begin{align}
-R^2H(R)\leq&(m-2)\int_{B_{\sqrt{2(m-2)}}}r^{2-m}\abs{u_r}^2\diff x,\label{eq:H0}\\
-R^{2m-2}e^{-R^2/2}H(R)\leq&(m-2)\int_{B_{\sqrt{2(m-2)}}}r^{m-2}e^{-r^2/2}\dfrac{\abs{u_{\theta}}^2}{r^2}\diff x \label{eq:H2},\\
-R^{m}e^{-R^2/4}H(R)\leq&(m-2)\int_{B_{\sqrt{2(m-2)}}}e^{-r^2/4}e(u)\diff x \label{eq:H4},
\end{align}
holds for all $R>0$.
\end{rem}
\begin{proof}The proof of \eqref{eq:H0} and \eqref{eq:H2} can be found in the proof of \autoref{lem:basic1}. The proof of \eqref{eq:H4} can be proved similarly since \eqref{eq:basic} implies the following formula
\begin{equation*}
\dfrac{\dif}{\dif r}\left(r^me^{-r^2/4}H(r)\right)=\left(\dfrac{r}{2}-\dfrac{m-2}{r}\right)r^me^{-r^2/4}\int_{\Lg{S}^{m-1}}e(u)\dif\theta,\quad \forall r\in(0,\infty).
\end{equation*}
\end{proof}
\begin{lem}\label{lem:1.2}Suppose $u$ satisfies \eqref{eq:quasi} and
\begin{equation*}
\liminf_{R\to\infty}R^{2m-2}e^{-R^2/2}\int_{\set{R}\times\Lg{S}^{m-1}}\left(\abs{u_r}^2-e(u)\right)\dif\theta>0,
\end{equation*}
then
\begin{align*}
\liminf_{R\to\infty}R^{m}e^{-R^2/4}\int_{B_R}\left(\abs{u_r}^2-e(u)\right)e^{-r^2/4}\diff x>0.
\end{align*}
\end{lem}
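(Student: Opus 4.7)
The plan is to pass to polar coordinates and reduce the claim to a one-variable asymptotic question. Writing
\begin{equation*}
\int_{B_R}\left(\abs{u_r}^2-e(u)\right)e^{-r^2/4}\diff x=\int_0^R r^{m-1}e^{-r^2/4}H(r)\diff r,
\end{equation*}
what must be shown is that this last quantity, multiplied by $R^{m}e^{-R^2/4}$, stays bounded away from zero as $R\to\infty$.

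The hypothesis supplies constants $c>0$ and $R_1>0$ with $H(r)\geq c\,r^{2-2m}e^{r^2/2}$ for $r\geq R_1$. (In particular $H$ is eventually positive, so by \autoref{lem:basic1} we are automatically in case \eqref{eq:22}.) Substituting into the integral, the piece over $[0,R_1]$ is merely a bounded constant which is killed by the decaying prefactor $R^{m}e^{-R^2/4}$, so the problem reduces to showing
\begin{equation*}
\liminf_{R\to\infty}R^{m}e^{-R^2/4}\int_{R_1}^R r^{1-m}e^{r^2/4}\diff r>0.
\end{equation*}

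This one-variable asymptotic is the main obstacle, but it yields to a clean integration by parts. A direct calculation gives the identity
\begin{equation*}
\dfrac{\dif}{\dif r}\left(r^{-m}e^{r^2/4}\right)=\left(\dfrac12-\dfrac{m}{r^2}\right)r^{1-m}e^{r^2/4}.
\end{equation*}
For $r\geq 2\sqrt m$ the factor in parentheses lies between $1/4$ and $1/2$, which yields the pointwise comparison $r^{1-m}e^{r^2/4}\geq 2\,\dfrac{\dif}{\dif r}\!\left(r^{-m}e^{r^2/4}\right)$. Integrating from $\max\set{R_1,2\sqrt m}$ up to $R$ produces
\begin{equation*}
\int_{R_1}^R r^{1-m}e^{r^2/4}\diff r\geq 2R^{-m}e^{R^2/4}-C,
\end{equation*}
for a boundary constant $C$ depending only on $m$ and $R_1$, and the first term dominates for $R$ large. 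Multiplying by $R^{m}e^{-R^2/4}$ and taking $\liminf$ gives the required strictly positive lower bound, finishing the proof. An equivalent path would be to apply L'Hopital's rule to the ratio $\bigl(\int_{R_1}^R r^{1-m}e^{r^2/4}\diff r\bigr)/\bigl(R^{-m}e^{R^2/4}\bigr)$, whose derivative quotient tends to $2$ by exactly the same identity.
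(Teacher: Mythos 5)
Your proof is correct and amounts to the same ``direct computation'' the paper alludes to: pass to polar coordinates, use the hypothesis to bound $H(r)$ from below by $c\,r^{2-2m}e^{r^2/2}$ for large $r$, discard the contribution over a fixed ball (killed by the prefactor $R^m e^{-R^2/4}$), and settle the resulting one-variable asymptotic $\int^R r^{1-m}e^{r^2/4}\,\dif r \sim 2R^{-m}e^{R^2/4}$ via the identity for $\tfrac{\dif}{\dif r}(r^{-m}e^{r^2/4})$ or L'Hopital. No gaps.
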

\begin{proof}A direct computation.
\end{proof}

Next, we prove the following energy estimate.
\begin{prop}\label{lem:C5}Suppose $u$ satisfies \eqref{eq:quasi}, then there is a constant $C_1$ depending only on $m$ such that for every $0\leq\delta\leq2$, we have
\begin{align*}
\int_{B_R}r^{4-m-\delta}\abs{u_r}^2\diff x\leq C_1\int_{B_{2\sqrt{m-2}}}r^{2-m}\abs{u_r}^2\diff x+4R^2H(R)^+,\quad\forall R>0.
\end{align*}
Here $f^+=\max\set{f,0}.$
\end{prop}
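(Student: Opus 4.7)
The strategy is to pass to polar coordinates and combine two ingredients already at our disposal: the radial identity \eqref{eq:H20}, $\frac{\dif}{\dif r}(r^2 H(r)) = (\frac{r^3}{2} - (m-2)r)\int_{\Lg{S}^{m-1}} \abs{u_r}^2 \dif\theta$, together with the endpoint bound \eqref{eq:H0}. Write $a(r) \coloneqq \int_{\Lg{S}^{m-1}} \abs{u_r}^2 \dif\theta \geq 0$ and $g(r) \coloneqq r^2 H(r)$, and set $r_0 \coloneqq 2\sqrt{m-2}$ (so $r_0 \geq 2 \geq 1$ when $m \geq 3$). Passing to polar coordinates, the target estimate is equivalent to
\begin{align*}
\int_0^R r^{3-\delta} a(r) \diff r \leq C_1 \int_0^{r_0} r\, a(r) \diff r + 4 R^2 H(R)^+,
\end{align*}
and I split the left-hand side at $r_0$ (the outer piece is empty if $R \leq r_0$).

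For the inner piece, the crude bound $r^{2-\delta} \leq r_0^{2-\delta} \leq r_0^2 = 4(m-2)$ (valid for $r \leq r_0$, $0 \leq \delta \leq 2$, and $r_0 \geq 1$) yields $\int_0^{\min(R,r_0)} r^{3-\delta} a(r) \diff r \leq 4(m-2) \int_0^{r_0} r\,a(r) \diff r$. For the outer piece (when $R > r_0$), on $[r_0, \infty)$ one checks $\frac{r^3}{2} - (m-2)r \geq \frac{r^3}{4}$, so \eqref{eq:H20} gives $r^{3-\delta} a(r) \leq 4 r^{-\delta} g'(r) \leq 4 g'(r)$ (using $r^{-\delta} \leq 1$ for $r \geq r_0 \geq 1$ and $g'(r) \geq 0$ on this range). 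Integrating,
\begin{align*}
\int_{r_0}^R r^{3-\delta} a(r)\diff r \leq 4\bigl(g(R) - g(r_0)\bigr) = 4 R^2 H(R) - 4 r_0^2 H(r_0).
\end{align*}

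It remains to dispose of these two boundary values. Trivially $4 R^2 H(R) \leq 4 R^2 H(R)^+$, producing the second summand of the target. For $-4 r_0^2 H(r_0)$: if $H(r_0) \geq 0$ this contributes nonpositively, while if $H(r_0) < 0$, applying \eqref{eq:H0} at $R = r_0$ gives $-r_0^2 H(r_0) \leq (m-2) \int_{B_{\sqrt{2(m-2)}}} r^{2-m}\abs{u_r}^2 \diff x \leq (m-2) \int_{B_{r_0}} r^{2-m}\abs{u_r}^2 \diff x$, since $\sqrt{2(m-2)} \leq r_0$. Combining the inner and outer estimates yields the claim with $C_1 = 8(m-2)$.

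The main subtlety is that $H(r_0)$ has no a priori sign, so one cannot simply dominate $-r_0^2 H(r_0)$ by zero; this is precisely why the conclusion features only $R^2 H(R)^+$ (coming from the unbounded end, where $r^2 H(r)$ is nondecreasing by \eqref{eq:H20}), while any negative contribution from the boundary term at $r_0$ is absorbed into the compactly supported integral on $B_{r_0}$ via \eqref{eq:H0}.
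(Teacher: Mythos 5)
Your argument is correct and follows essentially the same route as the paper: split the integral at $r_0 = 2\sqrt{m-2}$, bound the inner piece by a crude comparison of weights, bound the outer piece by integrating \eqref{eq:H20} using the inequality $\tfrac{r}{2}-\tfrac{m-2}{r}\geq\tfrac{r}{4}$ on $r\geq r_0$, and absorb $-r_0^2H(r_0)$ via \eqref{eq:H0}. The only difference is cosmetic (you write the argument in polar coordinates with $a(r)$, $g(r)$ and handle general $\delta$ in one pass rather than doing $\delta=0$ first), and you arrive at the same constant $C_1=8(m-2)$.
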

\begin{proof}We only consider the case $R>2\sqrt{(m-2)}$ and start with the formula \eqref{eq:H20}, i.e.,
\begin{align*}
\dfrac{\dif}{\dif r}\left(r^2H(r)\right)=r^2\left(\dfrac{r}{2}-\dfrac{m-2}{r}\right)\int_{\Lg{S}^{m-1}}\abs{u_r}^2\diff\theta.
\end{align*}
 For every $0<\rho<R$, we have 
\begin{align*}
R^2H(R)-\rho^2H(\rho)=&\int_{\rho}^Rr^2\left(\dfrac{r}{2}-\dfrac{m-2}{r}\right)\int_{\Lg{S}^{m-1}}\abs{u_r}^2\diff\theta\diff r\\
=&\int_{B_R\setminus B_{\rho}}\left(\dfrac r2-\dfrac{m-2}{r}\right)r^{3-m}\abs{u_r}^2\diff x.
\end{align*}
For $\sqrt{4(m-2)}\leq\rho<R$, we have
\begin{align*}
\int_{B_R\setminus B_{\rho}}r^{4-m}\abs{u_r}^2\diff x\leq 4R^{2}H(R)^+-4\rho^{2}H(\rho),
\end{align*}
which implies
\begin{align*}
\int_{B_R\setminus B_{2\sqrt{m-2}}}r^{4-m}\abs{u_r}^2\diff x\leq& 4R^{2}H(R)^+-4\left(2\sqrt{m-2}\right)^{2}H\left(2\sqrt{m-2}\right)\\
\leq&4R^{2}H(R)^++4(m-2)\int_{B_{2\sqrt{m-2}}}r^{2-m}\abs{u_r}^2\diff x.
\end{align*}
Here we have used \eqref{eq:H0}. In particular, we get the desired estimate for $\delta=0$. In general $0\leq\delta\leq2$,
\begin{align*}
\int_{B_R}r^{4-m-\delta}\abs{u_r}^2\diff x=&\int_{B_R\setminus B_{2\sqrt{m-2}}}r^{4-m-\delta}\abs{u_r}^2\diff x+\int_{B_{2\sqrt{m-2}}}r^{4-m-\delta}\abs{u_r}^2\diff x\\
\leq&\int_{B_R\setminus B_{2\sqrt{m-2}}}r^{4-m}\abs{u_r}^2\diff x+\left(2\sqrt{m-2}\right)^{2-\delta}\int_{B_{2\sqrt{m-2}}}r^{2-m}\abs{u_r}^2\diff x\\
\leq&8(m-2)\int_{B_{2\sqrt{m-2}}}r^{2-m}\abs{u_r}^2\diff x+4R^2H(R)^+.
\end{align*}
\end{proof}
As a consequence, 
\begin{cor}\label{lem:morrey}Suppose $u$ satisfies \eqref{eq:quasi}. Then there is a constant $C_2$ such that for every $0<\delta<1$,
\begin{align*}
\delta R^{-\delta}\int_{B_R}r^{2-m+\delta}e(u)\diff x\leq&C_2\int_{B_{2\sqrt{m-2}}}r^{2-m}\abs{u_r}^2\diff x+4R^2H(R)^+,\quad\forall R>0.
\end{align*}
In particular,
\begin{align}\label{eq:morrey}
R^{2-m}\int_{B_R}e(u)\diff x\leq&C_2\int_{B_{2\sqrt{m-2}}}r^{2-m}\abs{u_r}^2\diff x+4R^2H(R)^+,\quad\forall R>0.
\end{align}
\end{cor}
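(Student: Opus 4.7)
The plan is to reduce everything to bounds on $\int_{B_R}r^{2-m}\abs{u_r}^2\diff x$ (covered by \autoref{lem:C5}) and on $H(r)^-$ (covered by \eqref{eq:H0}). The starting point is the tautology $H(r)=\int_{\Lg{S}^{m-1}}(\abs{u_r}^2-e(u))\diff\theta$, which when integrated in polar coordinates over $B_R$ against the weight $r^{2-m+\delta}$ produces
\begin{equation*}
\int_{B_R}r^{2-m+\delta}e(u)\diff x = \int_{B_R}r^{2-m+\delta}\abs{u_r}^2\diff x - \int_0^R r^{1+\delta}H(r)\diff r.
\end{equation*}

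For the $\abs{u_r}^2$-piece I would exploit $r\leq R$ via $r^{2-m+\delta}\leq R^\delta r^{2-m}$ and then apply \autoref{lem:C5} with $\delta$ replaced by $2$, obtaining
\begin{equation*}
\int_{B_R}r^{2-m+\delta}\abs{u_r}^2\diff x \leq R^\delta\left(C_1\int_{B_{2\sqrt{m-2}}}r^{2-m}\abs{u_r}^2\diff x + 4R^2 H(R)^+\right).
\end{equation*}
For the $H$-piece, the key observation is that \eqref{eq:H0} read pointwise in $r$ (together with $H(r)^-=0$ when $H(r)\geq 0$) says
\begin{equation*}
H(r)^-\leq \frac{m-2}{r^2}\int_{B_{\sqrt{2(m-2)}}}\rho^{2-m}\abs{u_\rho}^2\diff x\qquad\text{for every }r>0.
\end{equation*}
Multiplying by $r^{1+\delta}$, integrating over $(0,R)$, and using $-H\leq H^-$ then gives
\begin{equation*}
-\int_0^R r^{1+\delta} H(r)\diff r \leq \frac{(m-2)R^\delta}{\delta}\int_{B_{\sqrt{2(m-2)}}}r^{2-m}\abs{u_r}^2\diff x,
\end{equation*}
with the factor $R^\delta/\delta$ coming from $\int_0^R r^{\delta-1}\diff r$ and finite precisely because $\delta>0$.

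Combining the two estimates and multiplying through by $\delta R^{-\delta}$, I would use $\delta\leq 1$ on the $\abs{u_r}^2$-term and observe that the prefactor $\delta R^{-\delta}$ exactly cancels the singular $R^\delta/\delta$ on the $H$-term. Since $\sqrt{2(m-2)}\leq 2\sqrt{m-2}$ for $m\geq 3$, both ball-integrals are absorbed into $\int_{B_{2\sqrt{m-2}}}r^{2-m}\abs{u_r}^2\diff x$, producing the main inequality with $C_2=C_1+m-2$. The ``in particular'' statement is proved by the same decomposition with the constant weight $R^{2-m}$ in place of $r^{2-m+\delta}$: the comparison $R^{2-m}\leq r^{2-m}$ for $r\leq R$ and $m>2$ handles the $\abs{u_r}^2$-piece without any prefactor, while $R^{2-m}\int_0^R r^{m-3}\diff r = 1/(m-2)$ cleanly kills the $(m-2)$ factor in the $H$-piece. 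The only genuine subtlety is the interplay between the prefactor $\delta R^{-\delta}$ and the singular $R^\delta/\delta$ coming from $\int_0^R r^{\delta-1}\diff r$; this is precisely why the corollary is stated with that specific prefactor rather than, say, $R^{-\delta}$ alone.
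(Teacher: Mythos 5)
Your argument is correct and is essentially the paper's own proof: same opening identity $\int_{B_R}r^{2-m+\delta}e(u)=\int_{B_R}r^{2-m+\delta}|u_r|^2-\int_0^R r^{1+\delta}H$, same bound $r^{2-m+\delta}\le R^\delta r^{2-m}$ followed by \autoref{lem:C5} with $\delta=2$, and same use of \eqref{eq:H0} to control $-\int_0^R r^{1+\delta}H$ by $(m-2)R^\delta/\delta$ times the fixed ball integral, with the $\delta R^{-\delta}$ prefactor then absorbing the singular factor. Your separate check of the ``in particular'' case is a bit more explicit than the paper (which simply lets $\delta\uparrow 1$ since $C_2$ is $\delta$-independent), but it is the same decomposition.
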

\begin{proof}Since
\begin{align*}
\int_{B_R}r^{2-m+\delta}e(u)\diff x=&-\int_0^Rr^{1+\delta}H(r)\diff r+\int_{B_R}r^{2-m+\delta}\abs{u_r}^2\diff x\\
\leq&\sup_{0<r<R}\left(-r^2H(r)\right)\times\int_0^Rr^{\delta-1}\diff r+R^{\delta}\int_{B_R}r^{2-m}\abs{u_r}^2\diff x\\
=&\sup_{0<r<R}\left(-r^2H(r)\right)\times\dfrac{R^{\delta}}{\delta}+R^{\delta}\int_{B_R}r^{2-m}\abs{u_r}^2\diff x.
\end{align*}
Now applying \autoref{lem:basic1} and \autoref{lem:C5}, there exists a constant $C_2$ depending only on $m$ such that
\begin{align*}
\delta R^{-\delta}\int_{B_R}r^{2-m+\delta}e(u)\diff x\leq&C_2\int_{B_{2\sqrt{m-2}}}r^{2-m}\abs{u_r}^2\diff x+4R^2H(R)^+.
\end{align*}
\end{proof}
Also, we can prove the following 
\begin{cor}\label{lem:energy}Suppose $u$ satisfies \eqref{eq:quasi}, then  there is a constant $C_3$ depending only on $m$ such that  for every  $0<\delta<1$,
\begin{align*}
\delta\int_{B_R}r^{2-m-\delta}e(u)\diff x\leq&C_3\int_{B_{2\sqrt{m-2}}}r^{1-m}e(u)\diff x+4R^2H(R)^{+},\quad\forall R>0.
\end{align*}
\end{cor}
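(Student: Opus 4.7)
My plan is to mirror the proof of \autoref{lem:morrey} via the decomposition
\begin{align*}
\int_{B_R}r^{2-m-\delta}e(u)\diff x=\int_{B_R}r^{2-m-\delta}\abs{u_r}^{2}\diff x-\int_0^R r^{1-\delta}H(r)\diff r,
\end{align*}
which comes from the identity $\int_{\Lg{S}^{m-1}}e(u)\diff\theta=\int_{\Lg{S}^{m-1}}\abs{u_r}^{2}\diff\theta-H(r)$. The essential new difficulty compared with \autoref{lem:morrey} is that writing $r^{1-\delta}=r^{-1-\delta}\cdot r^{2}$ leaves the divergent integral $\int_0^R r^{-1-\delta}\diff r$, so the clean $\sup_{0<r<R}(-r^{2}H(r))$ trick is unavailable and must be replaced by a region-wise argument. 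Set $A^{*}=\int_{B_{2\sqrt{m-2}}}r^{1-m}e(u)\diff x$ and note the elementary comparison $\int_{B_{2\sqrt{m-2}}}r^{2-m}\abs{u_r}^{2}\diff x\le 4\sqrt{m-2}\,A^{*}$ (from $r^{2-m}\le 2\sqrt{m-2}\,r^{1-m}$ on the small ball together with $\abs{u_r}^{2}\le 2e(u)$).

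For the $\abs{u_r}^{2}$-integral I would split at $2\sqrt{m-2}$. On $B_{2\sqrt{m-2}}$ the inequality $r^{2-m-\delta}=r^{1-\delta}\cdot r^{1-m}\le(2\sqrt{m-2})^{1-\delta}r^{1-m}\le 2\sqrt{m-2}\,r^{1-m}$ (using $0<\delta<1$ and $m\ge3$) gives a bound by $4\sqrt{m-2}\,A^{*}$. On the complementary annulus $r\ge 2\sqrt{m-2}\ge 1$ the factor $r^{-\delta}\le 1$ dominates the integrand by $r^{2-m}\abs{u_r}^{2}$, so applying \autoref{lem:C5} with $\delta=2$ completes
\begin{align*}
\int_{B_R}r^{2-m-\delta}\abs{u_r}^{2}\diff x\le C(m)A^{*}+4R^{2}H(R)^{+},
\end{align*}
preserving the coefficient $4$ in front of $R^{2}H(R)^{+}$.

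For the second piece $-\int_0^R r^{1-\delta}H(r)\diff r\le\int_0^R r^{1-\delta}(-H(r))^{+}\diff r$, I would split at $\sqrt{2(m-2)}$. On $(0,\sqrt{2(m-2)}]$, the monotonicity proven in \autoref{lem:basic1} (namely that $r^{2}H(r)$ is decreasing with $\lim_{r\to 0}r^{2}H(r)=0$) forces $H\le 0$, so the trivial bound $-H(r)\le\int_{\Lg{S}^{m-1}}e(u)\diff\theta$ together with $r^{1-\delta}\le\sqrt{2(m-2)}$ gives a contribution of $\sqrt{2(m-2)}\,A^{*}$. On $[\sqrt{2(m-2)},R]$, estimate \eqref{eq:H0} yields $(-H(r))^{+}\le(m-2)A''/r^{2}$ with $A''\le 4\sqrt{m-2}\,A^{*}$, and integrating $r^{-1-\delta}$ away from the origin produces $C(m)A^{*}/\delta$. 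Putting everything together and multiplying through by $\delta\in(0,1)$ absorbs the $1/\delta$ factor into a constant while keeping the $R^{2}H(R)^{+}$ coefficient at exactly $4$ (since $4\delta\le 4$), yielding the desired inequality with $C_{3}=C_{3}(m)$. The delicate point is arranging the two splittings so that every $1/\delta$-divergent contribution touches only $A^{*}$, never $R^{2}H(R)^{+}$; this is why the annular $\abs{u_r}^{2}$-estimate (which alone contributes the $R^{2}H(R)^{+}$ term) must be sharp in its coefficient $4$ and independent of $\delta$.
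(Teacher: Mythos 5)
Your proof is correct and follows essentially the same route as the paper's: split $e(u)$ into $\abs{u_r}^2$ and $-H$ via the identity $\int_{\Lg{S}^{m-1}}e(u)\diff\theta=\int_{\Lg{S}^{m-1}}\abs{u_r}^2\diff\theta-H(r)$, control the $\abs{u_r}^2$ piece with \autoref{lem:C5} (at $\delta=2$) to produce the $4R^2H(R)^+$ term, and control $-\int r^{1-\delta}H\diff r$ away from the origin via the bound $-r^2H(r)\leq$ const (from \eqref{eq:H0}/\autoref{lem:basic1}), so that $\int r^{-1-\delta}\diff r$ over the outer annulus contributes the $1/\delta$ factor that the prefactor $\delta$ cancels. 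The only visible difference is cosmetic: you explicitly handle the small ball $B_{2\sqrt{m-2}}$, which the paper's proof leaves implicit by restricting to the annulus $B_R\setminus B_{2\sqrt{m-2}}$ and invoking the lemmas.
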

\begin{proof}Similar to the proof of \autoref{lem:morrey}, for $0<\delta<1$ and $R>2\sqrt{m-2}$,
\begin{align*}
\int_{B_R\setminus B_{2\sqrt{m-2}}}r^{2-m-\delta}e(u)\diff x=&-\int_{2\sqrt{m-2}}^Rr^{1-\delta}H(r)\diff r+\int_{B_R\setminus B_{2\sqrt{m-2}}}r^{2-m-\delta}\abs{u_r}^2\diff x\\
\leq&\sup_{2\sqrt{m-2}<r<R}\left(-r^2H(r)\right)\times\int_{2\sqrt{m-2}}^Rr^{\delta-1}\diff r+\int_{B_R\setminus B_{2\sqrt{m-2}}}r^{2-m}\abs{u_r}^2\diff x\\
\leq&\sup_{2\sqrt{m-2}<r<R}\left(-r^2H(r)\right)\times\dfrac{2\sqrt{m-2}}{\delta}+\int_{B_R}r^{2-m}\abs{u_r}^2\diff x.
\end{align*}
Then \autoref{lem:basic1} and \autoref{lem:C5} gives the desired estimate.
\end{proof}

Now we prove \autoref{thm:main1}.

\begin{proof}[Proof of \autoref{thm:main1}]
Suppose the large energy condition holds, i.e., the claim $(1)$ is true. Then according to \autoref{lem:basic1} and \autoref{lem:1.2} (or c.f. \cite{Li2012nonexistence}), we know that $H(r)\leq0$ for every $r>0$. Now the claim $(2)$ follows from \autoref{lem:C5}. 
\par
From the claim $(2)$ to the claim $(3)$, we need only to prove that
\begin{align*}
\int_{\R^m}r^{2-m-\delta}\abs{\dif u}^2\diff x<\infty.
\end{align*}
holds for some $\delta>0$. According to \autoref{lem:energy}, we need only to claim that $\liminf_{R\to\infty}R^2H(R)^{+}\leq0$. This is true because
\begin{align*}
\liminf_{R\to\infty}R^2H(R)^{+}\leq\liminf_{R\to\infty}\int_{\set{R}\times\Lg{S}^{m-1}}\abs{u_r}^2\diff\theta
\end{align*}
and the claim $(2)$ implies the righthand is zero.
\par
From the claim $(3)$ to the claim $(1)$ is obvious.
\end{proof}

\section{Proof of \autoref{thm:main2}}
The following Lemma is proved in \cite{Li2010non}. Here we provide another proof which is simpler for $m>2$.
\begin{lem}\label{lem:decay}Suppose $f$ is a non-constant nonnegative smooth function satisfying
\begin{equation*}
\Delta f\geq\dfrac12rf_r,
\end{equation*}
then there exists a constant $C>0$ such that for $r$ large enough,
\begin{equation*}
\int_{\Lg{S}^{m-1}}f(r,\theta)\diff\theta>Cr^{-m}e^{r^2/4}.
\end{equation*}
\end{lem}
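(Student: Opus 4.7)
My plan is to reduce the pointwise inequality to a one-dimensional monotonicity formula for the spherical mean
\[
g(r):=\int_{\Lg{S}^{m-1}}f(r,\theta)\,\diff\theta,
\]
and then extract the claimed exponential growth from it. Averaging the hypothesis over $\Lg{S}^{m-1}$, and using that $\int_{\Lg{S}^{m-1}}\Delta_{\Lg{S}^{m-1}}f\,\diff\theta=0$, I expect to obtain the ODI $g''+\tfrac{m-1}{r}g'\geq\tfrac{r}{2}g'$, which rewrites in conservative form as $\bigl(r^{m-1}e^{-r^2/4}g'(r)\bigr)'\geq 0$. Thus $\varphi(r):=r^{m-1}e^{-r^2/4}g'(r)$ is nondecreasing on $(0,\infty)$; since $f$ is smooth at the origin, $\varphi(0^+)=0$, so $\varphi\geq 0$ and $g$ is itself nondecreasing.

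The crucial step is to upgrade $\varphi\geq 0$ to $\varphi(r_0)>0$ for some $r_0>0$. I would do this by contradiction: if $\varphi\equiv 0$, then $g$ is constant, so the integrated inequality is an equality, and since the integrand $\Delta f-\tfrac{r}{2}f_r\geq 0$ has sphere-average zero it must vanish identically. Thus $f$ becomes a nonnegative solution of $Lf=0$, where
\[
L:=\Delta-\tfrac{r}{2}\partial_r = e^{\abs{x}^2/4}\Div\bigl(e^{-\abs{x}^2/4}\nabla\cdot\bigr)
\]
is the Ornstein--Uhlenbeck operator. The constancy of $g$ then forces $f\in L^1(e^{-\abs{x}^2/4}\diff x)$, and a Liouville theorem for $L$ on the Gaussian-weighted space yields that $f$ is constant, contradicting the hypothesis. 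An elementary way to execute this last step is via the Bochner identity $L\abs{\nabla f}^2=\abs{\nabla f}^2+2\abs{\nabla^2 f}^2$ (valid when $Lf=0$), combined with a Caccioppoli-type weighted energy estimate, which collapses $\int\abs{\nabla f}^2 e^{-\abs{x}^2/4}\diff x$ to zero.

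Granted $\varphi(r_0)>0$, the lower bound follows mechanically: monotonicity gives $g'(r)\geq \varphi(r_0)\,r^{1-m}e^{r^2/4}$ for $r\geq r_0$, and a single integration by parts
\[
\int_{r_0}^{r} s^{1-m}e^{s^2/4}\diff s \geq 2r^{-m}e^{r^2/4}-2r_0^{-m}e^{r_0^2/4}
\]
(coming from $s^{1-m}e^{s^2/4}=\tfrac{d}{ds}(2s^{-m}e^{s^2/4})+2ms^{-m-1}e^{s^2/4}$ and discarding the nonnegative remainder) then delivers $g(r)\geq 2\varphi(r_0)\,r^{-m}e^{r^2/4}+O(1)$, hence $g(r)>Cr^{-m}e^{r^2/4}$ for all sufficiently large $r$ and some $C>0$. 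The only delicate point is the Liouville argument used to rule out $g'\equiv 0$; the rest is direct ODE integration.
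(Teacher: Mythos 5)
Your reduction to the ODI $\bigl(r^{m-1}e^{-r^2/4}g'\bigr)'\geq 0$, the treatment of the origin, and the final integration by parts are all exactly the paper's route (the paper cites Li--Zhu for the integration step and is even more conservative at $r=0$, noting only $g'=O(1/r)$, whence the hypothesis $m>2$). You are also right that the paper's one-line assertion ``since $f$ is not a constant, there exists $a>0$ with $g'(a)>0$'' conceals a real step: one must rule out that $g'\equiv 0$ while $f$ is non-constant, which amounts to a Liouville theorem for $L=\Delta-\tfrac12 x\cdot\nabla$, and your reduction ($g'\equiv 0\Rightarrow\varphi'\equiv 0\Rightarrow Lf=0$ pointwise, and $g$ constant forces $f\in L^1(e^{-|x|^2/4}\,\dif x)$) is correct.

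However, the specific mechanism you propose for that Liouville step does not close. The Bochner identity $L\abs{\nabla f}^2=\abs{\nabla f}^2+2\abs{\nabla^2 f}^2$ is correct, but the Caccioppoli estimate it yields is of the form
\begin{equation*}
\int \eta^2\abs{\nabla f}^2\,e^{-\abs{x}^2/4}\diff x\ \leq\ 2\int\abs{\nabla\eta}^2\abs{\nabla f}^2\,e^{-\abs{x}^2/4}\diff x,
\end{equation*}
which only forces $\int\abs{\nabla f}^2 e^{-\abs{x}^2/4}\diff x=0$ if you already know a priori that this weighted Dirichlet energy is finite. You only have $f\in L^1(e^{-\abs{x}^2/4}\diff x)$; neither $f\in L^2(e^{-\abs{x}^2/4}\diff x)$ nor $\nabla f\in L^2(e^{-\abs{x}^2/4}\diff x)$ follows. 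A version that does close: by the strong minimum principle (Hopf) for $L$ either $f\equiv 0$ or $f>0$ everywhere; in the latter case set $h=\log f$, so $Lh=-\abs{\nabla h}^2\leq 0$, and the weighted Caccioppoli estimate becomes
\begin{equation*}
\int\eta^2\abs{\nabla h}^2\,e^{-\abs{x}^2/4}\diff x\ \leq\ 4\int\abs{\nabla\eta}^2\,e^{-\abs{x}^2/4}\diff x\ \leq\ \frac{C}{R^2}\int_{\R^m}e^{-\abs{x}^2/4}\diff x\ \longrightarrow\ 0,
\end{equation*}
using only the finiteness of the Gaussian measure, so $h$ and hence $f$ is constant. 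With this replacement your argument is complete and supplies the justification the paper leaves implicit.
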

\begin{proof}Let
\begin{align*}
v(r)=\int_{\Lg{S}^{m-1}}f(r,\theta)\diff\theta,
\end{align*}
then a direct computation yields
\begin{align*}
\dfrac{\dif}{\dif r}\left(r^{m-1}e^{-r^2/4}\dfrac{\dif}{\dif r}v\right)\geq0.
\end{align*}
Since $\tfrac{\dif v}{\dif r}=O\left(\tfrac1r\right)$ as $r\to 0$, we obtain
\begin{align*}
\lim_{r\to0}r^{m-1}e^{-r^2/4}\dfrac{\dif}{\dif r}v=0,
\end{align*}
since $m>2$. In particular,
\begin{align*}
r^{m-1}e^{-r^2/4}\dfrac{\dif}{\dif r}v\geq0.
\end{align*}
Since $f$ is not a constant, there exists $a>0$ such that $\frac{\dif v}{\dif r}\vert_a>0$. The rest of the proof is simple (c.f. \cite{Li2010non}).
\end{proof}

Let $d(x)=\dist(u(x),u(0))$, then we have the following
\begin{lem}[Refine energy estimate]\label{lem:energy-estimate}Suppose $u$ is a quasi-harmonic sphere, then there is a constant $C_m$ depending only on $m$ such that for all $R>0$,
\begin{align*}
\int_{B_R}d^2\diff x\leq& C_mR^m \int_{B_{2\sqrt{m-2}}}r^{1-m}\abs{u_r}^2\diff x,\\
\int_{B_R}\abs{\nabla d}^2\diff x\leq& C_mR^{m-2}\int_{B_{2\sqrt{m-2}}}r^{1-m}\abs{u_r}^2\diff x.
\end{align*}
\end{lem}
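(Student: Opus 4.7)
Write $A := \int_{B_{2\sqrt{m-2}}} r^{1-m} \abs{u_r}^2 \diff x$. My plan combines the clean (thanks to finite energy) forms of \autoref{lem:C5} and \autoref{lem:morrey} with a radial Cauchy--Schwarz for $d(r,\theta)$ whose weight is calibrated against the $\delta=0$ case of \autoref{lem:C5}.

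First I would record that since $u$ has finite energy, \autoref{thm:main1} gives the large energy condition, so exactly as in the proof of $(1)\Rightarrow(2)$ there one has $H(R)\leq 0$ for every $R>0$. The $H(R)^+$ terms in \autoref{lem:morrey} and \autoref{lem:C5} therefore vanish; combined with the trivial inequality $r^{2-m}\leq 2\sqrt{m-2}\,r^{1-m}$ on $B_{2\sqrt{m-2}}$, this yields
\begin{align*}
\int_{B_R} e(u)\diff x \leq C_m R^{m-2} A, \qquad \int_{B_R} r^{4-m}\abs{u_r}^2\diff x \leq C_m A, \qquad \forall R>0.
\end{align*}
The second inequality of the lemma is then immediate: since $y\mapsto\dist(y,u(0))$ is $1$-Lipschitz, $\abs{\nabla d}^2\leq\abs{\dif u}^2=2e(u)$, so $\int_{B_R}\abs{\nabla d}^2\diff x\leq C_m R^{m-2} A$.

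For the first inequality I would reduce everything to a bound on $\int_{\Lg{S}^{m-1}} d(r,\theta)^2\diff\theta$ that is \emph{uniform} in $r$; integrating against $r^{m-1}\diff r$ then produces the desired $R^m$. Since $d(0)=0$ and $\abs{d_s}\leq\abs{u_s}$, one has $d(r,\theta)\leq\int_0^r\abs{u_s(s,\theta)}\diff s$. For $r\leq 2\sqrt{m-2}$ an ordinary Cauchy--Schwarz gives $\int_{\Lg{S}^{m-1}} d(r,\theta)^2\diff\theta\leq r\int_{B_r} s^{1-m}\abs{u_s}^2\diff x\leq 2\sqrt{m-2}\,A$. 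For $r>2\sqrt{m-2}$ I would split $d(r,\theta)\leq d(2\sqrt{m-2},\theta)+\int_{2\sqrt{m-2}}^r\abs{u_s}\diff s$ and apply weighted Cauchy--Schwarz with the pair $(s^{-3/2},s^{3/2})$:
\begin{align*}
\left(\int_{2\sqrt{m-2}}^r\abs{u_s}\diff s\right)^2 \leq \left(\int_{2\sqrt{m-2}}^r s^{-3}\diff s\right)\int_{2\sqrt{m-2}}^r s^3\abs{u_s}^2\diff s \leq \frac{1}{8(m-2)}\int_{2\sqrt{m-2}}^r s^3\abs{u_s}^2\diff s.
\end{align*}
Integrating over $\theta$ converts the right-hand side into $\int_{B_r\setminus B_{2\sqrt{m-2}}} s^{4-m}\abs{u_s}^2\diff x$, which the second clean bound above controls by $C_m A$ uniformly in $r$.

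The main obstacle, and the reason for the specific weight exponent $3$, is that a naive radial Cauchy--Schwarz (such as $d(r,\theta)^2\leq r\int_0^r\abs{u_s}^2\diff s$) leaves a residual integral $\int_{B_R} s^{1-m}\abs{u_s}^2\diff x$ that none of the estimates in Section~2 can absorb back onto $B_{2\sqrt{m-2}}$, and produces at best a factor of $R^{m+1}$. The exponent $3$ is chosen precisely so that after spherical integration the weight becomes $s^{4-m}$, matching exactly the $\delta=0$ case of \autoref{lem:C5} and hence uniformly bounded by $A$. Combining the two regimes and integrating in $r$ then finishes the proof.
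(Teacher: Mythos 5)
Your argument is correct and follows essentially the same route as the paper: you derive $H\leq 0$ from finite energy, use this to drop the $H(R)^+$ terms in Proposition~2.4 and Corollary~2.5, get the $\nabla d$ bound from the Morrey-type estimate, and control $\int_{\Lg{S}^{m-1}}d(r,\theta)^2\diff\theta$ uniformly in $r$ by a weighted radial Cauchy--Schwarz calibrated so that the angular integration lands on a weight covered by Proposition~2.4, then integrate against $r^{m-1}\diff r$. The only differences from the paper's computation are cosmetic: the paper applies Minkowski's inequality in $L^2(\Lg{S}^{m-1})$, splits the radial integral at $r=1$, and takes $\delta=1/2$ (weight $s^{5/2}$ against $s^{-5/2}$), while you use $(a+b)^2\leq 2(a^2+b^2)$, split at $r=2\sqrt{m-2}$, and take $\delta=0$ (weight $s^{3}$ against $s^{-3}$), yielding the same uniform bound.
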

\begin{rem}
\begin{enumerate}
\item Denoted $E_R(u)$ by the energy of $u$ on $B_R$, i.e.,
\begin{align*}
E_R(u)=\dfrac{1}{2}\int_{B_R}\abs{\dif u}^2e^{-x^2/4}\diff x.
\end{align*}
Then apply \autoref{lem:energy} to this Lemma to obtain the following estimate
\begin{align*}
\int_{B_R}d^2\diff x\leq& C_mR^m E_{R}(u),\\
\int_{B_R}\abs{\nabla d}^2\diff x\leq& C_mR^{m-2}E_{R}(u).
\end{align*}
\item
Li and Zhu (c.f. Lemma 3.2 in \cite{Li2010non}) obtained a similar result with constant $C_{m,u}$ depending only on $m$ and the total energy of $u$ such that
\begin{align*}
\int_{B_R}d^2\diff x\leq& C_{m,u}R^m,\\
\int_{B_R}\abs{\nabla d}^2\diff x\leq& C_{m,u}R^{m-2}.
\end{align*}

\end{enumerate}
\end{rem}
\begin{proof}[Proof of \autoref{lem:energy-estimate}]It is clear that
\begin{align*}
d(r,\theta)\leq\int_0^r\abs{u_s(s,\theta)}\diff s,\quad\abs{\nabla d}\leq\abs{\dif u}.
\end{align*}
Since the total energy of $u$ is finite, by \autoref{lem:1.2}, we have
\begin{align*}
\int_{\Lg{S}^{m-1}}\left(\abs{u_r}^2-e(u)\right)\diff\theta\leq0,\quad r>0.
\end{align*}
Applying \eqref{eq:morrey}, we obtain
\begin{equation*}
\int_{B_R}\abs{\nabla d}^2\leq 2C_2R^{m-2}\int_{B_{2\sqrt{m-2}}}r^{2-m}\abs{u_r}^2\diff x,\quad R>0.
\end{equation*}
\par
Next, we show
\begin{equation*}
\int_{\Lg{S}^{m-1}}\left(\int_0^r\abs{u_{s}(s,\theta)}\diff s\right)^2\diff\theta\leq C_m\int_{B_{2\sqrt{m-2}}}r^{1-m}\abs{u_r}^2\diff x,\quad\forall r>0.
\end{equation*}
Then the first part of the this Lemma follows from this inequality.
Without loss of generality, assume $r>1$. Applying \autoref{lem:C5} and taking $\delta=1/2$, we get
\begin{align*}
\int_{B_R}r^{7/2-m}\abs{u_r}^2\diff x\leq C_1\int_{B_{2\sqrt{m-2}}}r^{2-m}\abs{u_r}^2\diff x,\quad R>0.
\end{align*}
Using Minkowski's inequality, we get
\begin{align*}
\left(\int_{\Lg{S}^{m-1}}\left(\int_0^r\abs{u_{s}(s,\theta)}\diff s\right)^2\diff\theta\right)^{1/2}\leq&\int_0^r\left(\int_{\Lg{S}^{m-1}}\abs{u_s(s,\theta)}^2\diff\theta\right)^{1/2}\diff s\\
\leq&\int_0^1\left(\int_{\Lg{S}^{m-1}}\abs{u_s(s,\theta)}^2\diff\theta\right)^{1/2}\diff s+\int_1^r\left(\int_{\Lg{S}^{m-1}}\abs{u_s(s,\theta)}^2\diff\theta\right)^{1/2}\diff s\\
\leq&\left(\int_0^1\int_{\Lg{S}^{m-1}}\abs{u_s(s,\theta)}^2\diff\theta\diff s\right)^{1/2}\\
&+\left(\int_1^rs^{5/2}\int_{\Lg{S}^{m-1}}\abs{u_s}^2\diff\theta\diff s\right)^{1/2}\left(\int_1^rs^{-5/2}\diff s\right)^{1/2}\\
\leq&C_m\left(\int_{B_{2\sqrt{m-2}}}r^{1-m}\abs{u_r}^2\diff x\right)^{1/2}.
\end{align*}
\end{proof}
\begin{lem}\label{lem:distance}Suppose $u$ is a quasi-harmonic sphere, then there is a constant $C_m$  depending only on $m$ such that
\begin{equation*}
\fint_{B_r}\exp\left(C_m^{-1}E_r(u)^{-1/2}r^{2-m}d\right)\diff x\leq C_m,\quad\forall r>1.
\end{equation*}
\end{lem}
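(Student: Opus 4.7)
The plan is to establish the exponential integrability of $d$ through a John--Nirenberg argument, after promoting the energy estimates of \autoref{lem:energy-estimate} to a bound on $\|d\|_{\operatorname{BMO}(B_r)}$.

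First, I would upgrade \autoref{lem:morrey} to a Morrey-type gradient bound valid on \emph{every} ball $B_\rho(x_0)\subset B_r$, not just those centered at the origin:
$$\int_{B_{\rho}(x_{0})}|du|^{2}\diff x\leq C\rho^{m-2}E_{r}(u).$$
The key point is that \eqref{eq:basic} is a consequence of testing the quasi-harmonic equation $\tau(u)=\frac12 x\cdot\dif u$ against the radial dilation vector field about $0$; repeating the computation with the dilation centered at $x_0$ produces the same Pohozaev-type identity modulo the drift $\frac12 x\cdot du$, whose contribution is controlled since $e^{-|x|^2/4}$ is bounded above and below on $B_r$. Combining with $|\nabla d|\leq|du|$ and Poincar\'{e}'s inequality, $\fint_{B_\rho(x_0)}|d-d_{B_\rho(x_0)}|^{2}\diff x\leq C\rho^{2}\fint_{B_\rho(x_0)}|du|^{2}\diff x\leq CE_{r}(u)$, giving the dimensional $\operatorname{BMO}$ bound $\|d\|_{\operatorname{BMO}(B_r)}\leq CE_{r}(u)^{1/2}$.

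The John--Nirenberg inequality then furnishes dimensional constants $c,C_0$ with
$$\fint_{B_r}\exp\bigl(cE_{r}(u)^{-1/2}|d-d_{B_r}|\bigr)\diff x\leq C_0.$$
Set $\alpha:=C_m^{-1}r^{2-m}E_r(u)^{-1/2}$ and choose $C_m\geq c^{-1}$. Since $r\geq1$ and $m\geq3$ force $r^{2-m}\leq1$, we get $\alpha\leq cE_r(u)^{-1/2}$, so pointwise $e^{\alpha(d-d_{B_r})}\leq e^{cE_r(u)^{-1/2}|d-d_{B_r}|}$ and thus $\fint_{B_r}e^{\alpha(d-d_{B_r})}\diff x\leq C_0$. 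Cauchy--Schwarz combined with \autoref{lem:energy-estimate} yields $d_{B_r}\leq\bigl(\fint_{B_r}d^2\diff x\bigr)^{1/2}\leq C'E_r(u)^{1/2}$, hence $\alpha d_{B_r}\leq C'/C_m$ is a dimensional constant. Assembling,
$$\fint_{B_r}e^{\alpha d}\diff x=e^{\alpha d_{B_r}}\fint_{B_r}e^{\alpha(d-d_{B_r})}\diff x\leq e^{C'/C_m}C_0\leq C_m$$
once $C_m$ is fixed large enough in terms of $m$.

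The main obstacle is the Morrey gradient estimate on non-centered balls. The identities of Section~2 exploit the radial symmetry of $\tau(u)=\frac12 x\cdot\dif u$ about the origin, and extending them to an arbitrary base point $x_0\in B_r$ requires rerunning the Pohozaev identity of \autoref{lem:basic1} with the shifted center, keeping careful track of the error terms produced by the drift $\frac12 x\cdot du$. Modulo this technical step, the rest of the argument is standard harmonic analysis: $\operatorname{BMO}$-regularity plus John--Nirenberg, with the $r^{2-m}$ factor simply ensuring the rescaled exponent falls well within the John--Nirenberg window.
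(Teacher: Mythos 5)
Your John--Nirenberg skeleton and the bound on $d_{B_r}$ via Cauchy--Schwarz and \autoref{lem:energy-estimate} both match the paper, but the crucial intermediate BMO estimate is wrong as stated, and this is where the proof actually lives.

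You claim a Morrey bound $\int_{B_\rho(x_0)}|du|^2\le C\rho^{m-2}E_r(u)$ on \emph{all} balls $B_\rho(x_0)\subset B_r$ with $C$ depending only on $m$, leading to $\|d\|_{\mathrm{BMO}(B_r)}\le C_m E_r(u)^{1/2}$ with no $r$-dependence. This is strictly stronger than what the paper proves: the paper's estimate, borrowed from Lemma~3.5 of \cite{Li2010non}, is $[d]_{*,B_{2r}}\le C_m\sqrt{E_{2r}(u)}\,(1+r)^{m-2}$, with an essential $(1+r)^{m-2}$ factor. The $r^{2-m}$ in the exponent of the Lemma exists precisely to cancel that factor; if your version held, the Lemma's conclusion could be strengthened to $\fint_{B_r}\exp(C_m^{-1}E_r(u)^{-1/2}d)\diff x\le C_m$ with no $r^{2-m}$, which should have been a warning sign.

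The gap is in the justification of the off-center Morrey bound. When you rerun the Pohozaev identity with dilations about $x_0\ne0$, the drift contributes a term of the schematic form $\frac12\int_{B_\rho(x_0)}(x\cdot du)\bigl((x-x_0)\cdot du\bigr)\diff x$. At $x_0=0$ this is $\frac12\int r^2|u_r|^2$, sign-definite, which is what makes \eqref{eq:basic} usable; at $x_0\ne0$ it is not sign-definite and has magnitude $\lesssim(\rho+|x_0|)\rho\int_{B_\rho(x_0)}|du|^2$. This produces a multiplicative degradation of the monotonicity of $\rho^{2-m}\int_{B_\rho(x_0)}|du|^2$ by a factor like $\exp\bigl(C(|x_0|\rho+\rho^2)\bigr)$, and comparing the value at small $\rho$ to the value at $\rho\sim r$ introduces a factor of order $e^{Cr^2}$, not a dimensional constant. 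Your remark that ``the drift is controlled since $e^{-|x|^2/4}$ is bounded above and below on $B_r$'' is exactly the problem: the ratio of these bounds is $e^{r^2/4}$, which is far worse than $(1+r)^{m-2}$. So the Morrey bound with a purely dimensional constant is not established, and the centered-at-origin estimates of Section~2 do not transfer to arbitrary centers without an $r$-dependent loss. The paper sidesteps this by quoting Li--Zhu's Lemma~3.5 (which carries exactly the $(1+r)^{m-2}$ loss), feeding in the sharper dependence on the local Gaussian energy via \autoref{lem:energy} and \autoref{lem:energy-estimate}.
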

\begin{proof}
By the energy estimate \autoref{lem:energy}, using an argument similar to the one used in the proof of Lemma 3.5 in \cite{Li2010non}, we can prove  that the BMO subnorm $[d]_{*,B_{2r}}$ of $d$ over $B_{2r}$ satisfies
\begin{align}\label{eq:bmo}
[d]_{*,B_{2r}}\coloneqq\sup_{x\in Q\subset B_{2r}}\fint\abs{d(y)-d_{Q}}\diff y\leq C_{m}\sqrt{E_{2r}(u)}(1+r)^{m-2},
\end{align}
where the supermum is taken over all cubes $x\in Q\subset B_{2r}$. The John-Nirenberg theorem (c.f. Lemma 1 in \cite{John1961on}) claims that there is two constants $C_5,C_6$ depends only on $m$ such that for all cubes $Q\subset B_{2r}$,
\begin{align*}
\abs{\set{x\in Q:\abs{d(x)-d_Q}>s}}\leq C_5\exp\left(-\dfrac{C_6s}{[d]_{*,B_{2r}}}\right)\abs{Q},
\end{align*}
which implies
\begin{align*}
\fint_{B_{r}}\exp\left(\dfrac{C_6\abs{d-d_{B_r}}}{2[d]_{*,B_r}}\right)\diff x\leq  C_5,\quad\forall r>0.
\end{align*}
Since we have the estimate \eqref{eq:bmo}, as a consequence,  there is a constant $C_7$ which depends only on $m$ such that
\begin{align*}
\fint_{B_{r}}\exp\left(C_7^{-1}E_r(u)^{-1/2}r^{2-m}\abs{d-d_{B_r}}\right)\diff x\leq  C_7,\quad\forall r>1.
\end{align*}
Finally, according to \autoref{lem:energy-estimate}, we can find a constant $C_8$ depending only $m$ such that
\begin{align*}
d_{B_r}\coloneqq\fint_{B_r}d\diff x\leq C_8E_r(u)^{1/2}.
\end{align*}
Therefore, we get the desired estimate.
\end{proof}
\begin{rem}Checking the proof of Lemma 3.5 in \cite{Li2010non} step by step, and using the argument mentioned above, one can prove the following refine estimate,
\begin{align*}
\fint_{B_r}\exp\left(C_m^{-1}\tilde E_{2\sqrt{m-2}}(u)^{-1/2}r^{2-m}d\right)\diff x\leq C_m,\quad\forall r>1.
\end{align*}
Here
\begin{align*}
\tilde E_R(u)=\int_{B_R}r^{1-m}\abs{u_r}^2\diff x.
\end{align*}
In fact, checking the proof (c.f. page 455 in \cite{Li2010non} ), the constants come from either \autoref{lem:energy-estimate} or $\tilde E_{3m}(u)$ which can be controlled by $\tilde E_{2\sqrt{m-2}}(u)$ thanks to \autoref{lem:energy}. Hence one can prove the required refine BMO estimate \eqref{eq:bmo}.
\end{rem}
Now we give a poof of \autoref{thm:main2}.
\begin{proof}[Proof of \autoref{thm:main2}]Let $\tilde N$ be the universal covering of $N$. Let $\tilde u:\R^m\To\tilde N$ be a lift of $u$ with $\tilde u=u\circ\pi$ where $\pi:\tilde N\To N$ is the covering map. It is easy to see that
\begin{align*}
\int_{\R^m}e(\tilde u)e^{-\abs{x}^2/4}\diff x<\infty.
\end{align*}
Set $f=\rho\circ\tilde u$, then
\begin{align*}
\Delta f-\dfrac12r\partial_rf=\tilde\nabla^2\rho(\tilde u)(\dif\tilde u,\dif\tilde u)>0.
\end{align*}
\par
Fixed $p>0$. Notice that  there is a constant  $C>0$ such that
\begin{align}\label{eq:f1}
\int_{B_{2R}}f^p\diff x=\int_{B_{2R}}\left(\rho\circ\tilde u\right)^p\diff x\leq C^p\int_{B_{2R}}e^{\tfrac p4\tilde d^{2/m}}\diff x,\quad R>0.
\end{align}
Applying Young's inequality,
\begin{equation*}
A+B\geq\left(PA\right)^{1/P}\left(QB\right)^{1/Q},\quad A,B>0,\quad P,Q\geq1,\quad 1/P+1/Q=1,
\end{equation*}
we obtain that for $\tilde\delta=p/(2m)$,
\begin{align*}
\tilde\delta r^{2-m}\tilde d+\left(\dfrac p4-\tilde\delta\right)r^2=&\dfrac{p}{2m}r^{2-m}\tilde d+\left(\dfrac p4-\dfrac{p}{2m}\right)r^2\\
=&\dfrac{p}{4}\left(\dfrac{2}{m}r^{2-m}\tilde d+\dfrac{m-2}{m}r^2\right)\\
\geq&\dfrac{p}{4}\left( r^{2-m}\tilde d\right)^{2/m}\left(r^2\right)^{(m-2)/m}\\
=&\dfrac p4\tilde d^{2/m}.
\end{align*}
Therefore, according to \eqref{eq:f1}, for $R>0$, we have
\begin{align}\label{eq:f2}
\int_{B_{2R}}f^p\diff x\leq C^p\int_{B_{2R}}e^{\tilde\delta R^{2-m}\tilde d(\tilde u,y_0)}e^{(p/4-\tilde\delta)R^2}\diff x=C^p\int_{B_{2R}}e^{2^{m-2}\tilde\delta (2R)^{2-m}\tilde d(\tilde u,y_0)}e^{(p/4-\tilde\delta)R^2}\diff x.
\end{align}
We can choose $p>0$ sufficiently small so that
\begin{align*}
2^{m-2}\tilde\delta= 2^{m-3}m^{-1}p\leq C_m^{-1}E^{-1/2},
\end{align*}
which is equivalent to
\begin{equation*}
E\leq \dfrac{m^2}{4^{m-3}C_m^2p^2}.
\end{equation*}
According to \autoref{lem:distance} and \eqref{eq:f2}, we can see that
\begin{align*}
\int_{B_{2R}}f^p\diff x\leq& C^pe^{(p/4-\tilde\delta)R^2}\int_{B_{2R}}\exp\left(C_m^{-1}E^{-1/2}(2R)^{2-m}\tilde d(\tilde u,y_0)\right)\diff x\leq C^pC_m(2R)^me^{(p/4-p/(2m))R^2}
\end{align*}
holds for $R$ large enough.
\par
If $f$ is not a constant, applying \autoref{lem:decay} we obtain that for  $R$ large enough,
\begin{align*}
\int_{B_R}f\diff x\geq C_u R^{-2}e^{R^2/4}.
\end{align*}
Here $C_u>0$ is a constant which is independent of $R$.
Since $f\geq0$ satisfies
\begin{equation*}
\Div\left(e^{-\abs{x}^2/4}\nabla f\right)\geq0,
\end{equation*}
applying Moser's iteration (c.f. page 167 in \cite{Li2012nonexistence}), for every $p>0$, there is a constant $C_{p}>0$ depending only on $p,m$ such that
\begin{equation*}
\fint_{B_{R}}f\diff x\leq C_{p}R^{m/p}\left(\int_{B_{2R}}f^p\diff x\right)^{1/p}
\end{equation*}
holds for $R$ large enough. Consequently, for $R$ large enough
\begin{align}\label{eq:f3}
\int_{B_{2R}}f^p\diff x\geq C_{p}^{-p}C_u^pR^{-(m+2)p-m}e^{pR^2/4}.
\end{align}
Together with \eqref{eq:f2} and \eqref{eq:f3}, we know that
\begin{align*}
0<C_p^{-p}C_u^p\leq C^pC_m2^mR^{2m+(m+2)p}e^{-p R^2/(2m)}\to 0,\quad\text{as}\ R\to\infty.
\end{align*}
This contradiction  means that $f$ is a constant. Moreover, since $\rho$ is a strictly convex function, we get that $\dif\tilde u=0$, i.e., $\tilde u$ is a constant. As a consequence, $u$ is a constant.

\end{proof}


\begin{thebibliography}{1}

\bibitem{Chang1992finite}
K.~C. Chang, W.~Y. Ding, and R.~G. Ye, \emph{Finite-time blow-up of the heat
  flow of harmonic maps from surfaces}, J. Differential Geom. \textbf{36}
  (1992), no.~2, 507--515. \MR{1180392}

\bibitem{Coron1989explosion}
J.~M. Coron and J.~M. Ghidaglia, \emph{Explosion en temps fini pour le flot des
  applications harmoniques}, C. R. Acad. Sci. Paris S\'er. I Math. \textbf{308}
  (1989), no.~12, 339--344. \MR{992088}

\bibitem{Ding1992generalization}
W.~Y. Ding and F.~H. Lin, \emph{A generalization of {E}ells-{S}ampson's
  theorem}, J. Partial Differential Equations \textbf{5} (1992), no.~4, 13--22.
  \MR{1192714}

\bibitem{Eells1964harmonic}
J.~Eells and J.~H. Sampson, \emph{Harmonic mappings of {R}iemannian manifolds},
  Amer. J. Math. \textbf{86} (1964), 109--160. \MR{0164306 (29 \#1603)}

\bibitem{John1961on}
F.~John and L.~Nirenberg, \emph{On functions of bounded mean oscillation},
  Comm. Pure Appl. Math. \textbf{14} (1961), 415--426. \MR{0131498}

\bibitem{Li2009liouville}
J.~Li and M.~Wang, \emph{Liouville theorems for self-similar solutions of heat
  flows}, J. Eur. Math. Soc. (JEMS) \textbf{11} (2009), no.~1, 207--221.
  \MR{2471137}

\bibitem{Li2012nonexistence}
J.~Li and Y.~Y. Yang, \emph{Nonexistence of quasi-harmonic spheres with large
  energy}, Manuscripta Math. \textbf{138} (2012), no.~1-2, 161--169.
  \MR{2898752}

\bibitem{Li2010non}
J.~Li and X.~R. Zhu, \emph{Non existence of quasi-harmonic spheres}, Calc. Var.
  Partial Differential Equations \textbf{37} (2010), no.~3-4, 441--460.
  \MR{2592981 (2011a:58029)}

\bibitem{Lin1999harmonic}
F.~H. Lin and C.~Y. Wang, \emph{Harmonic and quasi-harmonic spheres}, Comm.
  Anal. Geom. \textbf{7} (1999), no.~2, 397--429. \MR{1685578}

\end{thebibliography}

\end{document}